      
\documentclass{amsart}

\usepackage{amsmath,amsthm}
\usepackage{amsfonts,amssymb}

\hfuzz1pc 
  

\newtheorem{thm}{Theorem}[section]
\newtheorem{cor}[thm]{Corollary}
\newtheorem{lem}[thm]{Lemma}

\newtheorem{defn}[thm]{Definition}

\theoremstyle{remark}


 \def\wb{{\mathbf w}}
 \def\xb{{\mathbf x}}
 \def\yb{{\mathbf y}}

 \def\Kb{{\mathbf K}}
 \def\Pb{{\mathbf P}}

 \def\CH{{\mathcal H}}

 \def\CL{{\mathcal L}}
 
 \def\CP{{\mathcal P}}

 \def\CV{{\mathcal V}}

 \def\RR{{\mathbb R}}


\begin{document}
\title[]{On hyperinterpolation on the unit ball} 
 
\author{Jeremy Wade}
\address{Department of Mathematics\\ Pittsburg State University\\
    Pittsburg, KS  66762.}
\email{jwade@pittstate.edu}

\date{\today}
\keywords{Hyperinterpolation, Discrete Approximation, Multi-dimensional Approximation}
\subjclass[2010]{41A55, 41A63, 41A35, 41A25}
\thanks{The author wishes to thank Yuan Xu for his helpful comments.}

\begin{abstract}
	We prove estimates on the Lebesgue constants of the hyperinterpolation operator for functions on the unit ball $B^d \subset \RR^d$, with respect to Gegenbauer weight functions, $(1-\|\xb\|^2)^{\mu-1/2}$.  The relationship between orthogonal polynomials on the sphere and ball is exploited to achieve this result, which provides an improvement on known estimates of the Lebesgue constant for hyperinterpolation operators on $B^2$.
\end{abstract}

\maketitle

\section{Introduction}
Hyperinterpolation was introduced by Sloan in \cite{Sloan95} as an approximation technique which preserves a classical inequality of Erd\"os and Tur\`an, concerning interpolation polynomials in a univariate setting, to higher dimensions.  Combining two common approximation techniques, hyperinterpolation uses the Fourier orthogonal expansion of a function in orthogonal polynomials, but approximates the integrals used in the expansion by means of a cubature formula with positive weights.  Hence, hyperinterpolation provides a polynomial approximation which relies only on a discrete set of data.

Much attention has been given to hyperinterpolation on spheres and balls.  On spheres, Le Gia and Sloan \cite{LeGiaSloan} studied the Lebesgue constant of the hyperinterpolation operator, showing that Lebesgue constant grew on the order of the Lebesgue constant of the minimal projection operator on $C(S^{d-1})$, subject to certain ``regularity'' conditions on the cubature formulae.  Reimer was able to independently prove in \cite{Reimer00} the same estimate obtained by Le Gia and Sloan, and further, that the ``regularity'' condition imposed on the cubature formulae was not needed, and that the result held for any positive cubature formula.  Specifically, if $\CL_n$ is a hyperinterpolation operator on the space of continuous functions on $S^{d-1}$, which is a projection onto the space of polynomials of degree $n$, Le Gia and Sloan, and Reimer, were able to prove there exist postive constants $b_d$ and $c_d$ satisfying
\begin{displaymath}
	b_d n^{(d-2)/2} \leq \| \CL_n \| \leq c_d n^{(d-2)/2}.
\end{displaymath}
Similar estimates were obtained for Sobolev spaces on the sphere in \cite{HesseSloan}.  

Regarding hyperinterpolation on the ball, Atkinson, Chien, and Hansen \cite{Atkinson09} investigated hyperinterpolation on continuous functions on the unit disk $B^2$ in $\RR^2$.  An esimate for the Lebesgue constant for a hyperinterpolation operator using a particular cubature formula was found.  The cubature formula was obtained by combining quadrature formulas on a closed interval and a circle, and the estimate on the Lebesgue constant obtained for the hyperinterpolation operator was
\begin{displaymath}
	\|\CL_n\| \leq c\, n \log(n+1)
\end{displaymath}
for some positive constant $c$. 

In this paper, we improve the estimate obtained in \cite{Atkinson09}, and extend the results to general hyperinterpolation formulas stemming from cubature formulas with positive weights on balls in higher dimensions, where the cubature formulas may also be with respect to certain Gegenbauer weight functions.  Our results rely on the results on the sphere obtained in \cite{Reimer00}, and the connection between orthogonal polynomials on the sphere and the ball, which were developed by Xu.  

In Section 2 of this paper, we will cover the background information needed for the proof of our theorem.  In Section 3, we will state and prove our main theorem on the Lebesgue constants of the hyperinterpolation operators.

\setcounter{equation}{0}

\section{Orthogonal Polynomials and Cubature Formulae} 
\setcounter{equation}{0}

\subsection{Orthogonal Polynomials on $B^d$ and $S^d$}

The structural interplay between orthogonal polynomials on the unit ball $B^d$ and $S^d$ are essential in our result, so we review these results here.  It is assumed that the reader has a basic understanding of orthogonal polynomials; the standard references are \cite{Szego} for univariate and \cite{DX} for multivariate orthogonal polynomials.  

We denote the space of polynomials in $d$ variables with total degree less than or equal to $n$ by $\Pi_n^d$.  We define $\CP_n^d$ to be the space of polynomials of total degree $n$ in $d$ variables. 

We denote the space of orthogonal polynomials of total degree $n$ in $d$ variables with respect to the weight function $\omega$ on a set $\Omega$ by $\CV_n^d(\omega)$.  We will be particularly interested in orthogonal polynomials on $B^d$, with respect to the Gegenbauer weight function, $\wb_\mu (\xb) = (1-\|\xb\|^2)^{\mu - 1/2}$ for $\mu>-1/2$.

The dimension of the space $\CV_n^d(\wb_\mu)$ is $r_n^d = \binom{n+d-1}{d}$.  Given a orthonormal basis of $\CV_n^d(\wb_\mu)$, $\{P_{n,1}^\mu, P_{n,2}^\mu, \ldots , P_{n, r_n^d}^\mu\}$, the reproducing kernel of $\CP_n^d$, $\Pb_n(\wb_\mu; \xb, \yb)$, is the function defined by
\begin{equation}\label{eqn:repker}
	\Pb_n(\wb_\mu; \xb, \yb) = \sum_{j=0}^{r_n^d} P_{n,j}^\mu(\xb) P_{n,j}^\mu(\yb).
\end{equation}
It earns its name because it ``reproduces'' orthogonal polynomials of degree $n$:
\begin{displaymath}
	f(\xb)=\int_{B^d}\, \Pb_n(\wb_\mu; \xb,\yb) f(\yb) \wb_\mu(\yb) d\yb 
\end{displaymath}
for all $f \in \CP_n^d$. 
The reproducing kernel of the space $\Pi_n$ with respect to $\wb_\mu$ is the function $\Kb_n(\wb_\mu; \xb, \yb)$, defined by
\begin{displaymath}
	\Kb_n(\wb_\mu; \xb, \yb) = \sum_{j=0}^n \Pb_j(\wb_\mu; \xb, \yb)
\end{displaymath}
and satisfying
\begin{equation} \label{eqn:ballrepker}
	f(\xb)=\int_{B^d}\, \Kb_n(\wb_\mu; \xb,\yb) f(\yb) \wb_\mu(\yb) d\yb 
\end{equation}
for all $f \in \Pi_n^d$. 

We now turn to orthogonal polynomials on the sphere.  Orthogonal polynomial structure on the unit sphere differs somewhat from the orthogonal polynomial structures observed in Euclidean regions with non-zero measure, due to the fact that unit sphere is a algebraic surface.  The orthogonal polynomials on the unit sphere with respect to the surface measure $d\omega$ ($d\omega_d$ for $S^d$ when dimension requires specification) are known as spherical harmonics, which are harmonic, homogeneous polynomials restricted to the unit sphere.  It turns out that harmonic, homogenuous polynomials of different degrees in $d+1$ variables are orthogonal with respect to integration over $S^d$.  We denote the space of homogeneous, harmonic polynomials of degree $n$ in $d+1$ variables by $\CH_n^{d+1}$.  When restricted to the unit sphere, the space of polynomials of degree $n$ can be decomposed orthogonally into the subspaces $\CH_k^{d+1}$, where we are considering the polynomials to be restricted to the unit sphere.

Spherical harmonics can be generalized in order to include the possibility of weight functions; for example, a study of spherical harmonics with respect to weight functions invariant under reflection groups is given in Chapter 5 of \cite{DX}.  When considering spherical harmonics with respect to a given weight function $H$, we use the notation $\CH_n^{d+1}(H)$.  

A reproducing kernel for spherical harmonics also exists.  We may take an orthonormal basis for the space of spherical harmonics of degree $n$ to obtain the reproducing kernel for that space, in the same way as \eqref{eqn:repker}. We denote the reproducing kernel of the space of spherical harmonics of degree $n$ with respect to the weight function $H$ by $P_n(H; \xb, \yb)$, and it satisfies
\begin{equation}
	\label{eqn:sphharrepker}
	f(\xb)=\int_{S^{d}}\, P_n(H; \xb, \yb) f(\yb) H(\yb) d \omega_d(\yb)
\end{equation}
where $\omega_d$ is the standard surface measure for $S^{d}$, whenever $f$ is a spherical harmonic of degree $n$.  We may also obtain a reproducing kernel for the space of all polynomials, restricted to the unit sphere, of degree less than or equal to $n$, $K_n$, by letting $K_n(H; \xb, \yb) = \sum_{k=0}^n P_k(H; \xb, \yb)$; $K_n$ then satisfies 
\begin{equation*}
	f(\xb)=\int_{S^{d}}\, K_n(H; \xb, \yb) f(\yb) H(\yb) d \omega_d(\yb)
\end{equation*}
whenever $f$ is a polynomial of degree less than or equal to $n$ and $\xb \in S^d$.

\subsection{Relationship between Orthogonal Polynomials on Spheres and Balls}

The relationships between orthogonal polynomials on spheres and balls is very rich.  We will focus on the relationship between reproducing kernels on spheres and balls.  This subsection outlines this relationship, which is given detailed treatment in Section 3.8 in \cite{DX}.  We first provide some definitions that are needed to describe the relationship we will exploit.

\begin{defn}
	A function $H$ is \underline{centrally symmetric} if $H(\xb)=H(-\xb)$ for all $\xb$ in its domain.  
\end{defn}

\begin{defn}
	A function $H$ is \underline{positively homogeneous of order $\sigma$} if, for all $\xb$ in its domain and all $t>0$, $H(t\xb)=t^\sigma H(\xb)$.
\end{defn}

\begin{defn}\label{defn:admiss}
	A function $H$ is \underline{admissible} on $\RR^{d+m+1}$ if $H(\xb)=H_1(\xb_1) H_2(\xb_2)$, where $\xb_1 \in \RR^d$,  $\xb_2 \in \RR^{m+1}$, $H_1$ is centrally symmetric, and $H_2$ is positively homogeneous.
\end{defn}

With these definitions, we are able to state the following theorem, which will be crucial in our results.

\begin{thm}[Thm 3.8.12, \cite{DX}]
	Let $d$ and $m$ be positive integers.  Suppose that $H$ is an admissible weight function on $\RR^{d+m+1}$ as in Definition \ref{defn:admiss}, where $H_2$ is positively homogeneous of order $2\tau$.  Let $W_H^m(\yb)=H_1(\yb) \wb_{\tau+m/2}(\yb)$ be defined on $B^d$.  Then the reproducing kernel of $\CP_n$ on $B^d$ with respect to $W_H^m$, $\Pb_n (W_H^m; \cdot, \cdot)$, is related to the reproducing kernel of spherical harmonics on $S^{d+m}$ with respect to the weight function $H$, $P_n(H; \cdot, \cdot)$ by the formula
	\begin{equation}
		\Pb_n(W_H^m; \xb, \yb)= \int_{S^m} P_n (H; (\xb, \xb^c), (\yb, \sqrt{1-\|\yb\|^2}\eta)) H_2(\eta) d\omega_m(\eta),
		\label{eqn:repker1}
	\end{equation}
	where $\xb^c \in \RR^{m+1}$ is any point satisfying $(\xb, \xb^c) \in S^{d+m+1}$.  In particular, if $H_2=H_1=1$, then  
	\begin{equation}
		\Pb_n( \wb_{m/2}; \xb, \yb)= \int_{S^m} P_n (1; (\xb, \xb^c), (\yb, \sqrt{1-\|\yb\|^2}\eta)) d\omega_m(\eta).
		\label{eqn:repker1.1}
	\end{equation}
	\label{thm:xuker}
\end{thm}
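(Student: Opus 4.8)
The plan is to evaluate the right-hand side of \eqref{eqn:repker1}, call it $Q_n(\xb,\yb)$, by expanding $P_n(H;\cdot,\cdot)$ in an orthonormal basis of $\CH_n^{d+m+1}(H)$ adapted to the splitting $\RR^{d+m+1}=\RR^d\times\RR^{m+1}$, and then performing the integration over $S^m$ explicitly. A single such computation shows at once that $Q_n(\cdot,\yb)$ is a polynomial of degree $n$ on $B^d$, that it is independent of the auxiliary point $\xb^c$, and that it reproduces degree-$n$ polynomials against $W_H^m$; by uniqueness of the reproducing kernel this yields $Q_n=\Pb_n(W_H^m;\cdot,\cdot)$.

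The adapted basis rests on the slicing formula for $S^{d+m}$: a point is written $\big(\yb,\sqrt{1-\|\yb\|^2}\,\eta\big)$ with $\yb\in B^d$, $\eta\in S^m$, and integration over $S^{d+m}$ disintegrates as the integral over $S^m$ of the $\eta$-slices, then over $B^d$ against $(1-\|\yb\|^2)^{(m-1)/2}\,d\yb$. Because $H_2$ is positively homogeneous of order $2\tau$, $H_2(\sqrt{1-\|\yb\|^2}\,\eta)=(1-\|\yb\|^2)^\tau H_2(\eta)$, so the weight $H=H_1H_2$ collapses in the $\yb$-direction to $H_1(\yb)(1-\|\yb\|^2)^{\tau+(m-1)/2}=H_1(\yb)\wb_{\tau+m/2}(\yb)=W_H^m(\yb)$. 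The basis is then built by ``lifting'': for $0\le j\le n$ take an orthonormal polynomial $p$ of degree $n-j$ on $B^d$ for the \emph{shifted} weight $H_1\wb_{\tau+j+m/2}$ and a spherical harmonic $S\in\CH_j^{m+1}(H_2)$, regarded as a degree-$j$ homogeneous polynomial on $\RR^{m+1}$, and form $\ub\mapsto p(\ub_1)S(\ub_2)$; on $S^{d+m}$ this equals $p(\ub_1)(1-\|\ub_1\|^2)^{j/2}S(\ub_2/\|\ub_2\|)$. Granting (see below) that, as $p$, $S$, $j$ vary, these functions form an orthonormal basis of $\CH_n^{d+m+1}(H)$, with the conventions making the lift isometric, substitute them into $P_n(H;\ub,\vb)=\sum Y(\ub)\overline{Y(\vb)}$.

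With $\ub=(\xb,\xb^c)$, $\vb=\big(\yb,\sqrt{1-\|\yb\|^2}\,\eta\big)$ and $\xb^c=\sqrt{1-\|\xb\|^2}\,\zeta$, the sum over the $B^d$-part (fixed $j$) reassembles into the ball reproducing kernel $\Pb_{n-j}\big(H_1\wb_{\tau+j+m/2};\xb,\yb\big)$, the radial factors yield $(1-\|\xb\|^2)^{j/2}(1-\|\yb\|^2)^{j/2}$, and the sum over the $S^m$-part yields the reproducing kernel $P_j(H_2;\zeta,\eta)$. Now integrate over $\eta\in S^m$ against $H_2(\eta)\,d\omega_m(\eta)$: only $P_j(H_2;\zeta,\eta)$ depends on $\eta$, and by \eqref{eqn:sphharrepker} (on $S^m$) $\int_{S^m}P_j(H_2;\zeta,\eta)H_2(\eta)\,d\omega_m(\eta)$ is the orthogonal projection of the constant function onto $\CH_j^{m+1}(H_2)$, which is $0$ for $j\ge1$ and $1$ for $j=0$; in particular the $\zeta$-, hence $\xb^c$-, dependence drops out. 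Only the $j=0$ term remains, and since $H_1\wb_{\tau+m/2}=W_H^m$ we obtain $Q_n(\xb,\yb)=\Pb_n(W_H^m;\xb,\yb)$. Taking $H_1=H_2=1$, $\tau=0$ gives \eqref{eqn:repker1.1}.

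The main obstacle is the structural claim used above: that $p(\ub_1)S(\ub_2)$, restricted to $S^{d+m}$, genuinely lies in $\CH_n^{d+m+1}(H)$, and that such lifts exhaust that space. Membership amounts (via the slicing formula) to the orthogonality, with respect to $H$, of $p(\ub_1)S(\ub_2)$ on $S^{d+m}$ against every monomial of degree $<n$; the $S^m$-angular integral of the monomial against $S$ and $H_2$ imposes a parity relation (a genuine argument when $H_2$ is not centrally symmetric), while in the $\RR^d$-direction it is precisely the shift of the Gegenbauer exponent to $\tau+j+m/2$ that makes the remaining integral vanish by orthogonality of $p$; exhaustion follows from the dimension count $\dim\CH_n^{d+m+1}(H)=\sum_{j=0}^n(\dim\CV_{n-j}^d)(\dim\CH_j^{m+1})$. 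This is exactly the material developed in \S3.8 of \cite{DX}, of which the present theorem is the culmination, and I would either cite it or reproduce that development.
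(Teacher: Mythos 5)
The paper offers no proof of this statement---it is quoted verbatim as Theorem 3.8.12 of \cite{DX}---so there is no internal argument to compare against. Your sketch correctly reconstructs the argument from \S 3.8 of \cite{DX}: the disintegration of $S^{d+m}$ over $B^d\times S^m$ with density $(1-\|\yb\|^2)^{(m-1)/2}$, the collapse of $H$ to $W_H^m$ via the order-$2\tau$ homogeneity of $H_2$, the adapted orthonormal basis of $\CH_n^{d+m+1}(H)$ built from ball polynomials for the shifted weights $H_1\wb_{\tau+j+m/2}$ tensored with $\CH_j^{m+1}(H_2)$, and the observation that integrating $P_j(H_2;\zeta,\eta)$ against $H_2(\eta)\,d\omega_m(\eta)$ projects the constant function and so kills every term with $j\ge 1$ (which also removes the $\xb^c$-dependence) are exactly the right steps, and the Vandermonde dimension count you cite does close the exhaustion argument. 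As you acknowledge, the structural decomposition of $\CH_n^{d+m+1}(H)$ carries the real weight of the proof and is deferred to \cite{DX} (where the membership argument for non-centrally-symmetric $H_2$ is handled); so what you have is a correct and well-organized sketch rather than a self-contained proof, which is a reasonable level of detail for a result the paper itself only cites.
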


We may sum both sides of the formula in \eqref{eqn:repker1} to obtain the following corollary.

\begin{cor}
	Under the assumptions of Theorem \ref{thm:xuker}, we have 
	\begin{equation}
		\Kb_n(W_H^m; \xb, \yb)= \int_{S^m} K_n (H; (\xb, \xb^c), (\yb, \sqrt{1-\|\yb\|^2}\eta)) H_2(\eta) d\omega_m(\eta),
		\label{eqn:repker2}
	\end{equation}
	where $\xb^c$ has the same meaning as in Theorem \ref{thm:xuker}.  If $H_1=H_2=1$, then
	\begin{equation}
		\Kb_n( \wb_{m/2}; \xb, \yb)= \int_{S^m} K_n (1; (\xb, \xb^c), (\yb, \sqrt{1-\|\yb\|^2}\eta)) d\omega_m(\eta).
		\label{eqn:repker2.1}
	\end{equation}
	\label{cor:repkerballtosphere}
\end{cor}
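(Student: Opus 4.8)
The plan is to derive \eqref{eqn:repker2} from \eqref{eqn:repker1} by a termwise summation over the degree, with no extra machinery needed. Recall the definitions $\Kb_n(W_H^m;\xb,\yb)=\sum_{j=0}^{n}\Pb_j(W_H^m;\xb,\yb)$ on the ball and $K_n(H;\cdot,\cdot)=\sum_{k=0}^{n}P_k(H;\cdot,\cdot)$ on the sphere. Since the hypotheses of Theorem~\ref{thm:xuker} — that $H=H_1H_2$ is admissible, that $H_2$ is positively homogeneous of order $2\tau$, and that $W_H^m=H_1\wb_{\tau+m/2}$ — make no reference to the degree, I would apply that theorem to each of $\Pb_0,\Pb_1,\dots,\Pb_n$, obtaining, for every $j\in\{0,1,\dots,n\}$,
\[
\Pb_j(W_H^m;\xb,\yb)=\int_{S^m}P_j\bigl(H;(\xb,\xb^c),(\yb,\sqrt{1-\|\yb\|^2}\,\eta)\bigr)\,H_2(\eta)\,d\omega_m(\eta),
\]
where the auxiliary point $\xb^c\in\RR^{m+1}$ is chosen as in Theorem~\ref{thm:xuker}. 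The one point deserving a word is that this $\xb^c$ can be chosen uniformly in $j$: it is merely \emph{any} point for which the concatenation $(\xb,\xb^c)$ lies on the relevant unit sphere, a condition that depends on $\xb$ alone, so the same $\xb^c$ serves in all $n+1$ instances and the identities may be added.

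Summing the displayed identity over $j=0,\dots,n$ and interchanging the (finite) sum with the integral — legitimate since each integrand is a polynomial in $\eta$, hence bounded and continuous on the compact set $S^m$, which carries finite surface measure — gives
\[
\Kb_n(W_H^m;\xb,\yb)=\int_{S^m}\Bigl(\sum_{j=0}^{n}P_j\bigl(H;(\xb,\xb^c),(\yb,\sqrt{1-\|\yb\|^2}\,\eta)\bigr)\Bigr)H_2(\eta)\,d\omega_m(\eta),
\]
and the inner sum is by definition $K_n\bigl(H;(\xb,\xb^c),(\yb,\sqrt{1-\|\yb\|^2}\,\eta)\bigr)$, which is \eqref{eqn:repker2}. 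The special case \eqref{eqn:repker2.1} then follows by taking $H_1=H_2=1$; here $H_2\equiv1$ being positively homogeneous of order $2\tau$ forces $\tau=0$, so $W_H^m=\wb_{m/2}$, exactly as \eqref{eqn:repker1.1} specializes \eqref{eqn:repker1}.

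Because the argument is nothing more than summation and an interchange of a finite sum with an integral, there is no genuine obstacle here; the only thing to be careful about is the degree‑uniform choice of $\xb^c$ noted above, and that is immediate from the form of the statement of Theorem~\ref{thm:xuker}.
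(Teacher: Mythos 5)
Your proof is correct and matches the paper's approach exactly: the paper simply notes that the corollary follows by summing both sides of \eqref{eqn:repker1} over the degree, which is precisely your termwise application of Theorem \ref{thm:xuker} followed by a finite sum--integral interchange. Your added remarks on the degree-uniform choice of $\xb^c$ and the justification of the interchange are fine but not needed beyond what the paper implicitly assumes.
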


\subsection{Cubature and Hyperinterpolation}

Using the reproducing kernels, one is able to obtain the best polynomial approximation in the $L^2$ norm to a given function.  Hyperinterpolation uses a cubature formula to approximate the integrals required to obtain this approximation.

\begin{defn} 
	Given a subset $\Omega$ of $\RR^d$ with weight function $\omega$, a \underline{cubature formula} of degree $n$ is a set of positive weights $(\lambda_\alpha) \subset \RR^+$ and nodes $(x_\alpha) \subset \Omega$, where $\alpha$ belongs to some index set $A$, satisfying 
\begin{displaymath}
	\sum_{\alpha \in A} \lambda_\alpha f(x_\alpha) = \int_{\Omega} f(x) \omega(x) dx
\end{displaymath}
whenever $f$ is a polynomial of degree less than or equal to $n$.
\end{defn}

We will make use of an arbitrary cubature on $S^d$ in our proof.  Many examples of such cubature formulae are known.  For examples, see \cite{LuoMeng}, \cite{Stroud} and Theorem 5.3 in \cite{Xu01}.  

Given a cubature formula on $B^d$ or $S^d$, the hyperinterpolation operator is obtained by approximating the integral in the formula \eqref{eqn:ballrepker} or \eqref{eqn:sphharrepker} with an appropriate cubature formula.  Specifically, taking a cubature formula of degree $n$, then the linear operator
\begin{displaymath}
	\CL_n(f)(\xb) := \sum_{\alpha \in A} \lambda_\alpha K_n(\xb_\alpha, \xb)f(\xb_\alpha)
\end{displaymath}
satisfies
\begin{displaymath}
	\CL_n(f)(\xb) = f(\xb)
\end{displaymath}
for all $f \in \Pi_n$.  The operator $\CL_n$ is the \emph{hyperinterpolation operator of degree $n$}.

\section{Main Results}

The main result of this paper is the following theorem, concerning the Lebesgue constant of $\CL_n$, which is defined by
\begin{displaymath}
	\| \CL_n\| = \sup \{ \|\CL_n f\|_{C(B^d)} \, : \, f \in C(B^d),\, \|f\|_{C(B^d)} \leq 1 \}.
\end{displaymath}

\begin{thm}
	Let $\CL_{2n}$ be the hyperinterpolation operator arising from a cubature formula with positive weights on the ball $B^d$, $d \geq 1$, with respect to the Gegenbauer weight function $\wb_\mu(\xb)=(1-\|\xb\|^2)^{\mu-1/2}$, where $\mu$ is a positive half-integer.  Then there exist positive constants $b_{d,\mu}$ and $c_{d,\mu}$ satisfying
	\begin{displaymath}
		b_{d,\mu} n^{(d-1)/2+\mu}\leq\|\CL_{2n}\| \leq c_{d,\mu} n^{(d-1)/2 + \mu}.
	\end{displaymath}
	\label{thm:main}
\end{thm}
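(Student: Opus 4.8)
\emph{Overall strategy.} Since $\mu$ is a positive half-integer, $m:=2\mu$ is a positive integer and $\wb_\mu=\wb_{m/2}$ on $B^d$, so Corollary~\ref{cor:repkerballtosphere} (equation \eqref{eqn:repker2.1}) ties the ball kernel $\Kb_{2n}(\wb_\mu;\cdot,\cdot)$ to the spherical kernel $K_{2n}(1;\cdot,\cdot)$ on $S^{d+m}$. The plan is to transport everything to that sphere via \thmref{thm:xuker} and the measure decomposition $\int_{S^{d+m}}F\,d\omega_{d+m}=\int_{B^d}\!\int_{S^m}F\bigl(\yb,\sqrt{1-\|\yb\|^2}\eta\bigr)\wb_{m/2}(\yb)\,d\omega_m(\eta)\,d\yb$, and then apply the sphere estimates of \cite{Reimer00}. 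Throughout, $\sigma_m:=\omega_m(S^m)$, and I will use the standard identity $\|\CL_{2n}\|=\max_{\xb\in B^d}\Lambda(\xb)$ with $\Lambda(\xb):=\sum_\alpha\lambda_\alpha|\Kb_{2n}(\wb_\mu;\xb_\alpha,\xb)|$; note $\dim S^{d+m}=d+m$, so the Reimer exponent $(d+m-1)/2$ equals $(d-1)/2+\mu$.

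\emph{Upper bound.} First I would pick any positive cubature $\{\mu_\beta,\eta_\beta\}_\beta$ on $S^m$ exact for degree $4n$. Testing against monomials in the $\RR^{m+1}$ block and using the decomposition above, one checks that the product rule $\{\lambda_\alpha\mu_\beta,\ (\xb_\alpha,\sqrt{1-\|\xb_\alpha\|^2}\eta_\beta)\}$ is a positive cubature on $S^{d+m}$ exact for degree $4n$; let $\wh{\CL}_{2n}$ be the associated hyperinterpolation operator on $C(S^{d+m})$, so $\|\wh{\CL}_{2n}\|\le c\,(2n)^{(d+m-1)/2}=c'n^{(d-1)/2+\mu}$ by \cite{Reimer00}. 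Given $f\in C(B^d)$, set $\wt f(\yb,z):=f(\yb)$, so $\|\wt f\|_{C(S^{d+m})}=\|f\|_{C(B^d)}$. Substituting $\wt f$ into $\wh{\CL}_{2n}$, restricting to the points $(\xb,\sqrt{1-\|\xb\|^2}\eta)$, averaging in $\eta$, and using \eqref{eqn:repker2.1} (each inner $S^m$-integral equals $\Kb_{2n}(\wb_\mu;\xb_\alpha,\xb)$) together with $\sum_\beta\mu_\beta=\sigma_m$ collapses the $\eta_\beta$-sum and gives
\[
\CL_{2n}f(\xb)=\frac1{\sigma_m}\int_{S^m}\wh{\CL}_{2n}\wt f\bigl(\xb,\sqrt{1-\|\xb\|^2}\eta\bigr)\,d\omega_m(\eta),\qquad \xb\in B^d .
\]
Hence $|\CL_{2n}f(\xb)|\le\|\wh{\CL}_{2n}\wt f\|_\infty\le\|\wh{\CL}_{2n}\|\,\|f\|_\infty$, which yields $\|\CL_{2n}\|\le c'n^{(d-1)/2+\mu}$.

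\emph{Lower bound.} Here I would evaluate $\Lambda$ at a boundary point $\xb_0\in S^{d-1}$, where $\xb_0^c=0$; then \eqref{eqn:repker2.1} and the zonal character of $K_{2n}(1;\cdot,\cdot)$ give $\Kb_{2n}(\wb_\mu;\yb,\xb_0)=\sigma_m K_{2n}\bigl(1;(\xb_0,0),(\yb,\sqrt{1-\|\yb\|^2}\eta)\bigr)$ for every $\eta$, a zonal polynomial in $\yb$ of degree $2n$. A Marcinkiewicz--Zygmund inequality — available because the cubature degree $4n$ is at least twice the polynomial degree and $\wb_\mu$ is a doubling weight — gives $\sum_\alpha\lambda_\alpha|P(\xb_\alpha)|\ge b\int_{B^d}|P|\,\wb_\mu$ for all $P\in\Pi_{2n}^d$. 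Applying it to $P=\Kb_{2n}(\wb_\mu;\cdot,\xb_0)$ and re-inserting the $S^m$-fibre integral (the integrand is fibre-constant) gives
\[
\|\CL_{2n}\|\ \ge\ \Lambda(\xb_0)\ \ge\ b\int_{B^d}|\Kb_{2n}(\wb_\mu;\yb,\xb_0)|\,\wb_\mu(\yb)\,d\yb\ =\ b\int_{S^{d+m}}|K_{2n}(1;(\xb_0,0),\sb)|\,d\omega_{d+m}(\sb),
\]
and the right-hand integral is the Lebesgue function at $(\xb_0,0)$ of the degree $2n$ Fourier projection on $S^{d+m}$, which is classically of order $(2n)^{(d+m-1)/2}=n^{(d-1)/2+\mu}$. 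Combining the two displays gives $\|\CL_{2n}\|\ge b_{d,\mu}\,n^{(d-1)/2+\mu}$.

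\emph{Where the work is.} The monomial bookkeeping for the product rule and the fibre-averaging identity are routine manipulations with \thmref{thm:xuker}. The substantive obstacle is the lower bound, and within it the Marcinkiewicz--Zygmund step: for an \emph{arbitrary} positive cubature of degree $4n$ one must bound the discrete Lebesgue sum $\Lambda(\xb_0)$ below, up to a constant independent of the cubature, by the continuous $L^1(\wb_\mu)$-norm of the reproducing kernel $\Kb_{2n}(\wb_\mu;\cdot,\xb_0)$. Everything else reduces, through the sphere--ball correspondence, to facts about the sphere already contained in \cite{Reimer00}.
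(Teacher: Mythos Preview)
Your upper-bound argument is correct and is essentially the paper's: both lift the ball cubature to a positive product cubature on $S^{d+m}$ via Lemma~\ref{lem:cubatureballtosphere}, use Corollary~\ref{cor:repkerballtosphere} to pass from $\Kb$ to $K$, and then invoke Reimer's sphere estimate. You package it as an operator identity and the paper works with the Lebesgue function directly, but the content is the same.

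For the lower bound you are making the problem harder than necessary by importing a Marcinkiewicz--Zygmund inequality on $(B^d,\wb_\mu)$ valid for an \emph{arbitrary} positive cubature with a cubature-independent constant. You correctly flag this as the substantive obstacle, but it is an extra ingredient that the paper's toolkit does not supply and does not need. The point is that one can stay on the sphere and reuse Reimer's own MZ step. You already have the crucial zonal identity at a boundary point $\xb_0\in S^{d-1}$:
\[
\Kb_{2n}(\wb_\mu;\xb_\alpha,\xb_0)=\sigma_m\,K_{2n}\bigl(1;(\xb_0,0),(\xb_\alpha,\sqrt{1-\|\xb_\alpha\|^2}\,\eta)\bigr)\qquad\text{for every }\eta\in S^m .
\]
Summing against $\lambda_\alpha$, setting $\eta=\eta_\beta$, and averaging with the weights $\mu_\beta$ (which sum to $\sigma_m$) gives directly
\[
\Lambda(\xb_0)\;=\;\sum_{\alpha,\beta}\lambda_\alpha\mu_\beta\,\bigl|K_{2n}\bigl(1;(\xb_0,0),(\xb_\alpha,\sqrt{1-\|\xb_\alpha\|^2}\eta_\beta)\bigr)\bigr|\;=\;\wh\Lambda\bigl((\xb_0,0)\bigr),
\]
the Lebesgue function of the sphere hyperinterpolation $\wh{\CL}_{2n}$ at the single point $(\xb_0,0)$. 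Reimer's argument in \cite{Reimer00} yields a \emph{pointwise} lower bound $\wh\Lambda(\yb)\ge a_{d+m}\,n^{(d+m-1)/2}$ for every $\yb\in S^{d+m}$ (his MZ inequality is pointwise, and the $L^1$ norm of the zonal Dirichlet kernel is independent of the pole by rotation invariance). Hence $\|\CL_{2n}\|\ge\Lambda(\xb_0)\ge a_{d+m}\,n^{(d-1)/2+\mu}$ with no MZ inequality on the ball.

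This is the route the paper intends: both halves are meant to reduce entirely to \cite{Reimer00} through the product cubature of Lemma~\ref{lem:cubatureballtosphere}. The paper's own write-up of the lower bound is terse to the point of being garbled (as literally written it asserts $|\sum_\alpha\lambda_\alpha f(x_\alpha)|\ge b\,n^{(d+m-1)/2}$ for an arbitrary $f$, which is of course false), but the intended mechanism is the one above, not a separate MZ estimate on $B^d$. So your detour can be deleted: once you have the boundary zonal identity, plug the nodes in \emph{before} integrating, and Reimer does the rest.
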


Integral in the proof of our theorem is the following result on hyperinterpolation on spheres proven by Reimer in \cite{Reimer00}.

\begin{thm}[Theorem 1, \cite{Reimer00}] 
	If $\CL_{2n}$ is the hyperinterpolation operator arising from a cubature formula of degree $2n$ with positive weights on $S^{d-1}$, then the norm of $\CL_{2n}$ as a projections operator from $C(S^{d-1})$ to $\Pi_{2n}$ satisfies
	\begin{displaymath}
		a_d n^{(d-2)/2}\leq \|\CL_{2n}\| \leq b_d n^{(d-2)/2},
	\end{displaymath}
	for positive constants $a_d$, $b_d$.
	\label{thm:reimer}
\end{thm}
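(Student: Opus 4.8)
The plan is to reduce both inequalities to the Lebesgue constant of the continuous orthogonal projection onto spherical polynomials and then to use the positivity of the cubature to compare the discrete operator with that continuous benchmark. Throughout write $K_n(1;\xb,\yb)$ for the reproducing kernel of the spherical polynomials of degree at most $n$ on $S^{d-1}$ with respect to surface measure, onto which $\CL_{2n}$ projects; this kernel is zonal, $K_n(1;\xb,\yb)=G_n(\langle\xb,\yb\rangle)$, and its pointwise size is governed by the classical estimates for the Gegenbauer polynomials associated with $S^{d-1}$. Since $\CL_{2n}$ is a projection built from a positive cubature exact to degree $2n$, it has the form $\CL_{2n}f(\xb)=\sum_{\alpha\in A}\lambda_\alpha K_n(1;\xb_\alpha,\xb)f(\xb_\alpha)$. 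The first, routine step is the operator-norm identity
\[
	\|\CL_{2n}\|=\max_{\xb\in S^{d-1}}\sum_{\alpha\in A}\lambda_\alpha\,|K_n(1;\xb_\alpha,\xb)|,
\]
obtained, for each fixed $\xb$, by choosing a continuous $f$ with $\|f\|_\infty=1$ matching $\sign K_n(1;\xb_\alpha,\xb)$ at the finitely many nodes. Write $L_n:=\max_{\xb}\int_{S^{d-1}}|K_n(1;\xb,\yb)|\,d\omega_{d-1}(\yb)$ for the continuous benchmark, and recall the classical estimate $L_n\asymp n^{(d-2)/2}$.

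For the lower bound I would average the displayed maximum over $\xb$. Because the zonal kernel is rotation invariant, $\int_{S^{d-1}}|K_n(1;\xb_\alpha,\xb)|\,d\omega_{d-1}(\xb)=L_n$ is independent of the node $\xb_\alpha$, and exactness on constants gives $\sum_\alpha\lambda_\alpha=|S^{d-1}|$. Exchanging sum and integral then yields
\[
	\|\CL_{2n}\|\ \ge\ \frac{1}{|S^{d-1}|}\sum_{\alpha}\lambda_\alpha\int_{S^{d-1}}|K_n(1;\xb_\alpha,\xb)|\,d\omega_{d-1}(\xb)\ =\ L_n\ \asymp\ n^{(d-2)/2},
\]
which settles the lower inequality with $a_d$ the constant from the classical estimate.

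The upper bound is the heart of the matter. The obstacle is that the naive Cauchy--Schwarz bound is too weak: using the reproducing property and exactness on $\Pi_{2n}$ one has $\sum_\alpha\lambda_\alpha K_n(1;\xb_\alpha,\xb)^2=K_n(1;\xb,\xb)\asymp n^{d-1}$, and combined with $\sum_\alpha\lambda_\alpha=|S^{d-1}|$ this gives only $\|\CL_{2n}\|\lesssim n^{(d-1)/2}$, a factor $n^{1/2}$ too large. To recover the sharp exponent I would exploit positivity through two of its consequences. First, the Christoffel-function bound $\lambda_\alpha\le 1/K_n(1;\xb_\alpha,\xb_\alpha)\asymp n^{-(d-1)}$, obtained by applying exactness to the square of $\yb\mapsto K_n(1;\xb_\alpha,\yb)$. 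Second, a Marcinkiewicz--Zygmund estimate: for a spherical cap $C$ of radius $\gtrsim 1/n$ one constructs a nonnegative polynomial majorant of $\mathbf{1}_C$ of degree $\le 2n$ with $\int\!\lesssim\meas(C)$ (a de la Vallée Poussin--Jackson kernel), so that exactness forces $\sum_{\xb_\alpha\in C}\lambda_\alpha\lesssim\meas(C)$.

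With these tools I would split $S^{d-1}$ into the central cap $\{\langle\xb_\alpha,\xb\rangle\ge 1-n^{-2}\}$ and dyadic annuli $A_j=\{\theta_\alpha\asymp 2^j/n\}$ for $1\le 2^j\lesssim n$, where $\theta_\alpha$ is the geodesic distance to $\xb$. On the central cap the weight bound together with $|K_n|\le K_n(1;\xb,\xb)\asymp n^{d-1}$ contributes $\lesssim 1$. On each annulus the Marcinkiewicz--Zygmund estimate bounds the total weight by $\meas(A_j)\asymp(2^j/n)^{d-1}$, while the pointwise Gegenbauer estimate bounds $|K_n(1;\xb_\alpha,\xb)|$ on $A_j$; multiplying and summing the resulting geometric series in $j$ reproduces the continuous integral, giving $\|\CL_{2n}\|\lesssim L_n\asymp n^{(d-2)/2}$. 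I expect the Marcinkiewicz--Zygmund step to be the main obstacle: constructing the majorants with the correct degree and mass down to the critical scale $1/n$, and verifying that the dyadic sum is dominated by the equatorial annuli so that no spurious power or logarithmic factor appears (for $d=2,3$ a logarithmic correction is genuine and matches the boundary behaviour of $L_n$).
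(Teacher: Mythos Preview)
The paper does not prove this statement at all: it is quoted as Theorem~1 of \cite{Reimer00} and used as a black box in the proof of Theorem~\ref{thm:main}. There is therefore no ``paper's own proof'' to compare your proposal against.

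As a sketch of Reimer's actual argument your outline is broadly on target. The lower bound via averaging over $\xb$ is exactly the standard reduction to the continuous Lebesgue constant $L_n$. For the upper bound you have correctly identified that naive Cauchy--Schwarz loses a factor $n^{1/2}$, and that the cure is the Marcinkiewicz--Zygmund property $\sum_{\xb_\alpha\in C}\lambda_\alpha\lesssim\meas(C)$ for caps of radius $\gtrsim 1/n$; Reimer's contribution was precisely to show that this ``regularity'' follows automatically from positivity and exactness to degree $2n$, via nonnegative polynomial majorants of cap indicators, just as you indicate. The dyadic decomposition then reduces the discrete sum to the continuous integral. One small slip: your parenthetical about a logarithmic correction for $d=3$ is off---on $S^2$ the Lebesgue constant is genuinely $\asymp n^{1/2}$ with no logarithm; the log appears only for $d=2$ (the circle), where the stated bound $n^{(d-2)/2}$ degenerates and the theorem as formulated should be read for $d\ge 3$.
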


Before we prove Theorem \ref{thm:main}, we first provide two lemmas that illustrate the relationship betwee cubature formulae on $B^d$ with respect to the weight $\wb_{m/2}$ and cubature formulae on $S^{d+m}$.  Both of these lemmas rely on the integration formula below.

\begin{lem}[Lemma 3.8.9, \cite{DX}]
	Suppose that $d$ and $m$ are positive integers.  Then
	\begin{equation}
		\label{eqn:dxballsphere}
		\int_{S^{d+m}}\, f(y)\, d\omega(y)=\int_{B^d} \wb_{m/2}(\xb)\int_{S^m} \, f(\xb, \sqrt{1-\|\xb\|^2} \eta)\, d\omega(\eta) d\xb.
	\end{equation}
	\label{lem:dxballsphere}
\end{lem}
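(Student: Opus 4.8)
The plan is to prove \eqref{eqn:dxballsphere} by an explicit change of variables that parametrizes $S^{d+m}$ by $B^d\times S^m$. Writing a point of $\RR^{d+m+1}$ as $(\xb,\vb)$ with $\xb\in\RR^d$ and $\vb\in\RR^{m+1}$, I would use the map $\Phi(\xb,\eta)=\bigl(\xb,\sqrt{1-\|\xb\|^2}\,\eta\bigr)$ for $\|\xb\|<1$ and $\eta\in S^m$. Since $\|\xb\|^2+(1-\|\xb\|^2)\|\eta\|^2=1$, the image lies in $S^{d+m}$; and from $(\xb,\vb)\in S^{d+m}$ with $\vb\neq 0$ one recovers $\xb$ and $\eta=\vb/\|\vb\|$ uniquely, so $\Phi$ is a smooth bijection from $\{\xb\in B^d:\|\xb\|<1\}\times S^m$ onto $S^{d+m}$ with the set $\{\|\xb\|=1\}$ (a $(d-1)$-sphere, hence $\omega$-null in $S^{d+m}$) removed. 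It then remains to compute the surface Jacobian of $\Phi$ and invoke the change-of-variables formula for surface integrals.

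For the Jacobian I would fix a local chart $s\mapsto\eta(s)$ of $S^m$, so that $d\omega_m=\sqrt{g(s)}\,ds$ with $g(s)=\det[\langle\partial_{s_j}\eta,\partial_{s_k}\eta\rangle]$. Differentiating, $\partial_{x_i}\Phi=(e_i,-x_i(1-\|\xb\|^2)^{-1/2}\eta)$ and $\partial_{s_j}\Phi=(0,\sqrt{1-\|\xb\|^2}\,\partial_{s_j}\eta)$, where $e_i$ is the $i$th coordinate vector in $\RR^d$. The key observation is that $\|\eta\|\equiv1$ forces $\langle\eta,\partial_{s_j}\eta\rangle=0$, so the Gram matrix $(D\Phi)^{T}(D\Phi)$ splits into a $\xb$-block $I_d+(1-\|\xb\|^2)^{-1}\xb\xb^{T}$, of determinant $(1-\|\xb\|^2)^{-1}$, and an $s$-block $(1-\|\xb\|^2)[\langle\partial_{s_j}\eta,\partial_{s_k}\eta\rangle]$, of determinant $(1-\|\xb\|^2)^{m}g(s)$. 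Hence the surface Jacobian is $(1-\|\xb\|^2)^{(m-1)/2}\sqrt{g(s)}=\wb_{m/2}(\xb)\sqrt{g(s)}$, and integrating $f\circ\Phi$ against it over $\{\|\xb\|<1\}\times S^m$ gives exactly the right-hand side of \eqref{eqn:dxballsphere}.

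I do not expect a serious obstacle here: once the block structure is noticed the computation is routine, and the only delicate points are checking that the exceptional set $\{\|\xb\|=1\}$ is $\omega$-null and the identity $\langle\eta,\partial_{s_j}\eta\rangle=0$ that makes the Gram matrix split. If one prefers to avoid charts entirely, an alternative is to use iterated polar coordinates in $\RR^{d+m+1}=\RR^d\times\RR^{m+1}$: pass to polar coordinates $\vb=\rho\eta$ in the $\RR^{m+1}$ factor and then to polar coordinates in the remaining $d+1$ variables, apply the resulting identity to a test function of the form $F(z)=\phi(\|z\|)g(z/\|z\|)$, and compare with the single expansion $\int_{\RR^{d+m+1}}F=\int_0^{\infty}r^{d+m}\int_{S^{d+m}}F(r\theta)\,d\omega(\theta)\,dr$; cancelling the common factor $\int_0^{\infty}r^{d+m}\phi(r)\,dr$ yields the claim, with the weight $(1-\|\xb\|^2)^{(m-1)/2}$ arising from combining the $\rho^{m}$ of polar coordinates on $S^m$ with the Jacobian $(1-\|\xb\|^2)^{-1/2}$ of the graph parametrization $\xb\mapsto(\xb,\sqrt{1-\|\xb\|^2})$ of the half-sphere.
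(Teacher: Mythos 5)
Your proof is correct and complete. The paper does not prove this lemma at all---it is imported verbatim as Lemma 3.8.9 of [DX]---and your change-of-variables argument via $\Phi(\xb,\eta)=(\xb,\sqrt{1-\|\xb\|^2}\,\eta)$ is essentially the standard derivation: the Gram matrix does split as you claim (the cross terms vanish because $\langle\eta,\partial_{s_j}\eta\rangle=0$), the block determinants $(1-\|\xb\|^2)^{-1}$ and $(1-\|\xb\|^2)^m g(s)$ combine to give the surface Jacobian $(1-\|\xb\|^2)^{(m-1)/2}\sqrt{g(s)}=\wb_{m/2}(\xb)\sqrt{g(s)}$, and the exceptional set $\{\|\xb\|=1,\ \vb=0\}$ is a copy of $S^{d-1}$ inside the $(d+m)$-dimensional sphere, hence $\omega$-null since $m\geq 1$.
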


Our first lemma shows that we may use a cubature formula on $S^{d+m}$ to obtain a cubature formula on $B^d$ with respect to $\wb_{m/2}$.

\begin{lem}
	\label{lem:spheretoballcubature}
	Suppose that $\sum_{\alpha \in A} \lambda_\alpha f(x_\alpha)$ is a cubature formula of degree $n$ on $S^{d+m}$.  This cubature formula is also of degreen $n$ on $B^d$ with respect to the weight function $\wb_{m/2}$, in the sense that
	\begin{displaymath}
		\int_{B^d}\, f(\xb) \, \wb_{m/2}\, dx = \sum_{\alpha \in A} \lambda_\alpha \tilde{f}(x_\alpha),
	\end{displaymath}
	whenever $f$ is a polynomial of degree less than or equal to $n$, where $\tilde{f}$ is a function defined on $S^{d+m}$, but only depending on the first $d$ variables, and satisfying
	\begin{displaymath}
		\tilde{f}(\xb, \xb^c) = \frac{1}{|S^m|} f(\xb),
	\end{displaymath}
	where $|S^m|$ is the surface measure of $S^m$.
\end{lem}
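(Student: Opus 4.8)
The plan is to apply the integration formula \eqref{eqn:dxballsphere} of Lemma \ref{lem:dxballsphere} to the function $\tilde f$, and then to invoke the exactness of the given cubature on $S^{d+m}$.

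The first step is to recognize that, although $\tilde f$ is described as a function on $S^{d+m}$, the prescription $\tilde f(\xb,\xb^c)=\frac{1}{|S^m|}f(\xb)$ is nothing but the restriction to the sphere of the polynomial on $\RR^{d+m+1}$ obtained by treating $f\in\Pi_n^d$ as a polynomial in the first $d$ of the $d+m+1$ coordinates and ignoring the remaining ones. Adjoining dummy variables does not raise the total degree, so this extension of $\tilde f$ lies in $\Pi_n^{d+m+1}$. Consequently the degree-$n$ cubature formula on $S^{d+m}$ is exact for $\tilde f$:
\begin{displaymath}
	\sum_{\alpha\in A}\lambda_\alpha\tilde f(x_\alpha)=\int_{S^{d+m}}\tilde f(y)\,d\omega(y),
\end{displaymath}
where the left-hand side makes sense because each node $x_\alpha$ lies on $S^{d+m}$ and $\tilde f(x_\alpha)$ depends only on its first $d$ coordinates.

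The second step is to evaluate the right-hand integral using \eqref{eqn:dxballsphere}. Since $\tilde f(\xb,\sqrt{1-\|\xb\|^2}\,\eta)=\frac{1}{|S^m|}f(\xb)$ does not depend on $\eta\in S^m$, the inner integral in \eqref{eqn:dxballsphere} collapses to $|S^m|\cdot\frac{1}{|S^m|}f(\xb)=f(\xb)$, whence $\int_{S^{d+m}}\tilde f(y)\,d\omega(y)=\int_{B^d}\wb_{m/2}(\xb)f(\xb)\,d\xb$. Combining this with the previous display yields the asserted identity. There is no real obstacle here; the only point deserving a moment's attention is the polynomiality of the extended $\tilde f$ and the resulting applicability of the sphere cubature to it, which is precisely what makes the argument go through.
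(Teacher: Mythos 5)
Your proof is correct and follows essentially the same route as the paper: relate $\int_{B^d} f\,\wb_{m/2}\,d\xb$ to $\int_{S^{d+m}}\tilde f\,d\omega$ via Lemma \ref{lem:dxballsphere} and then apply the exactness of the sphere cubature to $\tilde f$, whose polynomiality of degree $\le n$ you rightly flag as the one point needing justification. If anything, your bookkeeping of the $1/|S^m|$ normalization is cleaner than the paper's displayed computation, which carries a stray factor of $1/|S^m|$.
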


\begin{proof}
	First observe that 
	\begin{align*}
		\int_{B^d} \, f(\xb) \wb_{m/2}(\xb) \, d\xb & =\frac{1}{|S^m|} \int_{S^m} \int_{B^d} \tilde{f}(\xb, \eta \sqrt{1-\xb^2}) \wb_{m/2}(\xb)\, d\xb\, d\omega_m(\eta)\\
		&=\frac{1}{|S^m|} \int_{S^{d+m}} \tilde{f}(\yb) d\omega_{d+m}(\yb),
	\end{align*}
	where we have used Lemma \ref{lem:dxballsphere} for the second equality.   Applying the cubature formula to the last integral immediately gives the result. 
\end{proof}

The next lemma illustrates how a cubature formula on $B^{d}$ with respect to $\wb_{m/2}$ can be used to obtain a cubature formula on $S^{d+m}$.

\begin{lem}
	Suppose that $\sum_{\alpha \in A} \lambda_\alpha f(x_\alpha)$ is a cubature formula of degree $n$ on $B^d$ with respect to $\wb_{m/2}(\xb$).  Let $\sum_{\beta \in B} \nu_\beta f(t_\beta)$ be a cubature formula of degree $n$ on $S^m$.  Then the cubature formula 
	
	\begin{displaymath}
		\sum_{\alpha \in A, \beta \in B} \lambda_\alpha \nu_\beta f(x_\alpha, \sqrt{1-\|x_\alpha\|^2} t_\beta)
	\end{displaymath}
	is a cubature formula of degree $n$ on $S^{d+m}$.
	\label{lem:cubatureballtosphere}
\end{lem}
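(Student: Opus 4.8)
The plan is to verify the cubature identity directly by unwinding the two given formulas and invoking the integration formula of Lemma \ref{lem:dxballsphere}. Fix a polynomial $f$ on $\RR^{d+m+1}$ of degree at most $n$. The goal is to show
\begin{displaymath}
	\sum_{\alpha \in A,\, \beta \in B} \lambda_\alpha \nu_\beta\, f\bigl(x_\alpha, \sqrt{1-\|x_\alpha\|^2}\, t_\beta\bigr) = \int_{S^{d+m}} f(\yb)\, d\omega_{d+m}(\yb).
\end{displaymath}

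First I would start from the right-hand side and apply Lemma \ref{lem:dxballsphere} to write the integral over $S^{d+m}$ as an iterated integral,
\begin{displaymath}
	\int_{S^{d+m}} f(\yb)\, d\omega_{d+m}(\yb) = \int_{B^d} \wb_{m/2}(\xb) \left( \int_{S^m} f\bigl(\xb, \sqrt{1-\|\xb\|^2}\, \eta\bigr)\, d\omega_m(\eta) \right) d\xb.
\end{displaymath}
The key observation is that for each fixed $\xb \in B^d$, the map $\eta \mapsto f(\xb, \sqrt{1-\|\xb\|^2}\,\eta)$ is a polynomial in $\eta$ of degree at most $n$ (substituting the components of $\xb$ as constants and scaling the remaining $m+1$ variables by the constant $\sqrt{1-\|\xb\|^2}$ cannot raise the degree). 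Hence the degree-$n$ cubature formula $\sum_{\beta \in B} \nu_\beta g(t_\beta)$ on $S^m$ applies to the inner integral, giving
\begin{displaymath}
	\int_{S^m} f\bigl(\xb, \sqrt{1-\|\xb\|^2}\, \eta\bigr)\, d\omega_m(\eta) = \sum_{\beta \in B} \nu_\beta\, f\bigl(\xb, \sqrt{1-\|\xb\|^2}\, t_\beta\bigr).
\end{displaymath}
Substituting this back, the right-hand side becomes $\sum_{\beta \in B} \nu_\beta \int_{B^d} f\bigl(\xb, \sqrt{1-\|\xb\|^2}\, t_\beta\bigr) \wb_{m/2}(\xb)\, d\xb$.

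Next, for each fixed $\beta$, I claim the integrand $\xb \mapsto f(\xb, \sqrt{1-\|\xb\|^2}\, t_\beta)$, restricted to $B^d$, agrees with a polynomial in $\xb$ of degree at most $n$; this is the one point that needs care, since $\sqrt{1-\|\xb\|^2}$ is not a polynomial. Writing $f(\ub, \vb) = \sum_{|\gamma|\le n} c_\gamma \ub^{\gamma'} \vb^{\gamma''}$ and substituting $\vb = \sqrt{1-\|\xb\|^2}\, t_\beta$ produces terms $\xb^{\gamma'} (1-\|\xb\|^2)^{|\gamma''|/2} t_\beta^{\gamma''}$; the half-integer powers are problematic unless one groups terms by parity. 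The standard device (as used in the proof of Lemma \ref{lem:dxballsphere} itself and in \S3.8 of \cite{DX}) is to split $f$ into the part even in $\vb$ and the part odd in $\vb$. On $S^{d+m}$, replacing $f$ by its even-in-$\vb$ part changes neither side: the right side is unchanged because $S^m$ is symmetric under $\eta \mapsto -\eta$ and the odd part integrates to zero, while the left side is unchanged because the cubature on $S^m$ has nodes we may assume symmetric — or, more cleanly, because after the reduction to the inner $S^m$-integral the odd-in-$\eta$ part contributes zero to that cubature sum as well (its exact integral over $S^m$ is zero and the cubature is exact). For the even-in-$\vb$ part, every monomial has $|\gamma''|$ even, so $(1-\|\xb\|^2)^{|\gamma''|/2}$ is a genuine polynomial in $\xb$ of degree $|\gamma''| \le n - |\gamma'|$, and the whole substituted expression is a polynomial in $\xb$ of total degree at most $n$. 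Therefore the degree-$n$ cubature formula $\sum_{\alpha \in A} \lambda_\alpha g(x_\alpha)$ on $B^d$ with weight $\wb_{m/2}$ applies:
\begin{displaymath}
	\int_{B^d} f\bigl(\xb, \sqrt{1-\|\xb\|^2}\, t_\beta\bigr) \wb_{m/2}(\xb)\, d\xb = \sum_{\alpha \in A} \lambda_\alpha\, f\bigl(x_\alpha, \sqrt{1-\|x_\alpha\|^2}\, t_\beta\bigr).
\end{displaymath}
Summing over $\beta$ with weights $\nu_\beta$ yields exactly the claimed product cubature sum, completing the proof. Finally, I would note that the weights $\lambda_\alpha \nu_\beta$ are positive since both factors are, and that the nodes $(x_\alpha, \sqrt{1-\|x_\alpha\|^2}\, t_\beta)$ lie on $S^{d+m}$ because $\|x_\alpha\|^2 + (1-\|x_\alpha\|^2)\|t_\beta\|^2 = 1$ when $t_\beta \in S^m$.

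The main obstacle is the bookkeeping with the non-polynomial factor $\sqrt{1-\|\xb\|^2}$: one must justify the even/odd decomposition and the claim that dropping the odd part is harmless on both sides. Everything else is a direct application of the exactness of the two given cubature formulas together with Lemma \ref{lem:dxballsphere}.
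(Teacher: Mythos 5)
Your proof is correct and follows essentially the same route as the paper's: both arguments rest on Lemma~\ref{lem:dxballsphere} together with the parity observation that odd powers of $\sqrt{1-\|\xb\|^2}$ are annihilated by the symmetry of $S^m$, leaving a genuine polynomial of degree at most $n$ in $\xb$ to which the ball cubature can be applied. The only difference is organizational: the paper applies the $S^m$ cubature first and notes that the resulting $S^m$-integral is already a polynomial in $\xb$, whereas you split $f$ into its even and odd parts in the last $m+1$ variables and check that the odd part contributes zero to both sides --- the same parity argument in slightly more explicit form.
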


\begin{proof}
	Let $f$ be a polynomial of degree $n$ on $\RR^{d+m+1}$.  Note that, in \eqref{eqn:dxballsphere}, any odd powers of $\sqrt{1-\|\xb\|^2}$ will vanish after integrating over $S^m$, since an odd power of $\sqrt{1-\|\xb\|^2}$ in the integrand must be multiplied by at least one odd power of a component of $\eta$, and any odd component of $\eta$ will result in zero upon integrating.  Hence, after integrating over $S^m$, we are left with a polynomial of degree less than or equal to $n$ in $\xb$.  
	
	It follows that
	\begin{align*}
		\sum_{\alpha \in A} \sum_{\beta \in B} \lambda_\alpha \nu_\beta f(x_\alpha, \sqrt{1-\| x_\alpha\|^2}t_\beta) &= \sum_{\alpha \in A} \lambda_\alpha \int_{S^m} \, f(x_\alpha, \sqrt{1-\|x_\alpha\|^2} \eta) d\omega_m(\eta)\\
		&= \int_{B^d} \int_{S^m} \, f(\xb, \sqrt{1-\|\xb\|^2} \eta) d\omega_m(\eta) \\
		& \phantom{AAAAAAAAAAAA}\times \wb_{m/2}(\xb) d\xb\\
		& = \int_{S^{d+m}} f(\yb) \omega_{d+m}(\yb),
	\end{align*}
	so that the extension is a degree $n$ cubature on $S^{d+m}$.
\end{proof}

We now prove Theorem \ref{thm:main}.

\begin{proof}
We first use the standard inequality for estimating Lebesgue constants, and then apply Corollary \ref{cor:repkerballtosphere} to obtain
\begin{align*}
	\|\CL_{2n}\| & \leq \sup_{\xb \in B^d} \sum_{\alpha \in A} \lambda_\alpha \left| K_n \left( \wb_{\mu}; \xb, x_\alpha \right) \right|\\
	& \leq \sup_{\xb \in B^d} \sum_{\alpha \in A} \lambda_\alpha \left| \int_{S^m} K_n \left(1; (\xb,\xb^C), (x_\alpha, \sqrt{1-\|x_\alpha\|^2}\eta)  \right) d \omega_m(\eta) \right|,
\end{align*}
where we have let $\mu=2m$.  Let $\displaystyle \sum_{\beta \in B} \nu_\beta f(t_\beta)$ be any cubature formula of degree $2n$ for $S^{m}$.  Since the integrand is a polynomial of degree $n$ in $\eta$, we may apply a cubature formula of degree $2n$ to the integral over $S^m$ and obtain
\begin{align}
	\label{ali:junk}
	\| \CL_{2n} \| & \leq \sup_{\xb \in B^d} \sum_{\alpha \in A} \lambda_\alpha \left| \sum_{\beta \in B} \nu_\beta K_n\left( 1; (\xb, \xb^C), (x_\alpha, \sqrt{1-\|x_\alpha\|^2} t_\beta ) \right)\right| \notag\\
	& \leq \sup_{\yb \in S^{d+m}} \sum_{\alpha \in A} \sum_{\beta \in B} \lambda_\alpha \nu_\beta \left| K_n\left( 1; \yb, (x_\alpha, \sqrt{1-\|x_\alpha\|^2} t_\beta ) \right)\right|.
\end{align}
By Lemma \ref{lem:cubatureballtosphere}, the cubature formula $\sum_{\alpha, \beta} \lambda_\alpha \nu_\beta f(x_\alpha, \sqrt{1-\|x_\alpha\|^2}t_\beta)$ is a cubature formula of degree $2n$ on $S^{d+m}$.  In the proof of Theorem \ref{thm:reimer}, it was proven that the expression in \eqref{ali:junk} is less than $b_{d+m} (2n)^{(d+m-1)/2}$; hence we immediately obtain
\begin{align*}
	\|\CL_{2n}\| & \leq c_{d+m} n^{(d+m-1)/2} 
\end{align*}
for a positive constant $c_{d+m}$.

To finish the proof, we again let $\sum \nu_\beta f(t_\beta)$ be a cubature formula of degree $2n$ on $S^m$, and observe that if $f \in C(B^{d})$ with $\|f\|_{C(B^d)} \leq 1$, then we may define $\tilde{f}$ on $S^{d+m}$ as in Lemma \ref{lem:spheretoballcubature}, and obtain
\begin{align*}
	\sum_{\alpha \in A} \lambda_\alpha f(x_\alpha) & = \sum_{\alpha, \beta} \lambda_\alpha \nu_\beta \tilde{f}(x_\alpha, \sqrt{1-\|x_\alpha\|^2}t_\beta).
\end{align*}
However, by Lemma \ref{lem:cubatureballtosphere}, the cubature formula on the right of the equality is a cubature formula of degree $2n$ on $S^{d+m}$, and $\tilde{f}$ is in $C(S^{d+m})$ and has uniform norm less than or equal to $|S^m|^{-1}$.  Hence, by Theorem \ref{thm:reimer}, there is a positive constant $b_{d,m}$ satisying
\begin{align*}
	\left|\sum_{\alpha \in A} \lambda_\alpha f(x_\alpha) \right| & = \left|\sum_{\alpha, \beta} \lambda_\alpha \nu_\beta \tilde{f}(x_\alpha, \sqrt{1-\|x_\alpha\|^2}t_\beta)\right|\\
	&\geq b_{d,m}n^{(d+m-1)/2},
\end{align*}
which completes the proof of Theorem \ref{thm:main}.
\end{proof}

An immediate result of this theorem is an improvement on the Lebesgue constant estimate in \cite{Atkinson09}.

\begin{cor}  Let $\CL_n$ be a hyperinterpolation operator of degree $n$ on $B^2$ with respect to the standard Lebesgue measure.  Then there exist positive constants $b$ and $c$ satisfying $b\, n \leq \|\CL_n\| \leq c\, n$.
\end{cor}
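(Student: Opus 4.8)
The plan is to obtain the Corollary as a direct specialization of Theorem~\ref{thm:main}, so essentially all the work is identifying the parameters. The first step is to recognize that the standard Lebesgue measure on $B^2$ is the Gegenbauer weight $\wb_\mu$ with $\mu=1/2$: indeed $\wb_{1/2}(\xb)=(1-\|\xb\|^2)^{1/2-1/2}=(1-\|\xb\|^2)^0\equiv 1$. Since $1/2$ is a positive half-integer and $d=2\geq 1$, the hypotheses of Theorem~\ref{thm:main} are satisfied with $d=2$, $\mu=1/2$.

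The second step is to evaluate the exponent occurring in Theorem~\ref{thm:main}. With $d=2$ and $\mu=1/2$ one has $(d-1)/2+\mu=\tfrac12+\tfrac12=1$, so Theorem~\ref{thm:main} produces positive constants $b_{2,1/2}$ and $c_{2,1/2}$ with $b_{2,1/2}\,n\leq\|\CL\|\leq c_{2,1/2}\,n$ for the associated hyperinterpolation operator of degree $n$; setting $b=b_{2,1/2}$ and $c=c_{2,1/2}$ finishes the argument.

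The only point that requires a word of care, and what I would flag as the (mild) obstacle, is the bookkeeping of the index on $\CL$: Theorem~\ref{thm:main} is phrased for the operator written $\CL_{2n}$, whose subscript records the degree of exactness of the underlying positive cubature, whereas the Corollary speaks of a hyperinterpolation operator $\CL_n$ ``of degree $n$'', i.e.\ one reproducing $\Pi_n^d$ on $B^2$; these are the same operator, so the estimate transfers with only a renaming of constants. To make this fully robust to the chosen indexing convention, I would also note that the proof of Theorem~\ref{thm:main} never uses the parity of the cubature degree: it uses only that a positive cubature of degree $N$ on $B^2$ with respect to $\wb_1$ extends, via Lemma~\ref{lem:cubatureballtosphere} together with a positive cubature of matching degree on the circle, to a positive cubature of degree $N$ on $S^3$, and that Reimer's estimate (Theorem~\ref{thm:reimer}) on $S^3$ holds for a positive cubature of \emph{any} degree. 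In either reading the rate is linear in the degree, which is exactly the asserted $b\,n\leq\|\CL_n\|\leq c\,n$. There is no genuine difficulty beyond this: the substance is entirely contained in Theorem~\ref{thm:main}, and the Corollary is the case $(d,\mu)=(2,1/2)$ combined with the elementary identity $(d-1)/2+\mu=1$.
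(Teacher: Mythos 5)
Your proof is correct and takes the same route as the paper, which offers no separate argument and treats the corollary as the immediate specialization of Theorem~\ref{thm:main} to $d=2$, $\mu=1/2$ (so that $\wb_{1/2}\equiv 1$ and $(d-1)/2+\mu=1$), with the $\CL_n$ versus $\CL_{2n}$ indexing absorbed into the constants exactly as you observe. One typographical slip in your closing paragraph: the relevant weight on $B^2$ is $\wb_{1/2}$ (i.e.\ $\wb_{m/2}$ with $m=1$, matching your use of $S^3$), not $\wb_1$; this does not affect the argument.
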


\end{document}